\newcommand{\toc}{\tableofcontents}
\theoremstyle{plain}
\newtheorem{theorem}{Theorem}[section]
\newtheorem*{theorem*}{Theorem}
\newtheorem*{corollary*}{Corollary}
\newtheorem{lemma}[theorem]{Lemma}
\theoremstyle{definition}
\newtheorem{remark}[theorem]{Remark}
\newtheorem{question}[theorem]{Question}
\newtheorem*{definition*}{Definition}
\newcommand{\p}{\varphi}
\newcommand{\e}{\varepsilon}
\renewcommand{\d}{\mathrm{d}}
\newcommand{\CI}{\mathbb{C}}
\DeclareMathOperator{\Prob}{\mathrm{Prob}}
\DeclareMathOperator{\ssi}{\Leftrightarrow}
\DeclareMathOperator{\impl}{\Rightarrow}
\newcommand{\IE}{\mathbb{E}}
\newcommand{\conv}{\operatorname{conv}}
\newcommand{\ts}{\textsection}
\newcommand{\ip}[1]{\langle#1\rangle} % \ip{a,b} gives us <a,b>
\newcommand{\norm}[1]{\|#1\|}
\newcommand{\St}{\mathop{\mathrm{St}}}
\newcommand{\hh}{\mathcal{H}}
\title{Positive definite maps on amenable groups, II} 
\author{Mika\"el Pichot}
\author{Erik S\'eguin}
\address{Mika\"el Pichot, McGill University, 805 Sherbrooke St W., Montr\'eal, QC H3A 0B9, Canada}\email{mikael.pichot@mcgill.ca}
\address{Erik S\'eguin, McGill University, 805 Sherbrooke St W., Montr\'eal, QC H3A 0B9, Canada}\email{erik.seguin@mail.mcgill.ca}
\begin{document}

\begin{abstract}
We introduce an Ulam-type stability condition for positive definite maps defined on a countable group and prove that this condition characterizes amenability. 
\end{abstract}

\maketitle

\section{Introduction}

This paper is a continuation of ref.\ \cite{stable} on Ulam stability \cite{Ulam} for positive definite maps on groups.

A countable discrete group $G$  is said to be \emph{Ulam stable} (following \cite{bot13,dc18,dcot19}) if for every ${\delta>0}$, there exists some ${\e>0}$ such that for every Hilbert space $\hh$ and every $\e$-representation ${\p\colon G\to U(\hh)}$, there exists a unitary representation ${\pi\colon G\to U(\hh)}$ such that 
\[\norm{\p(x)-\pi(x)}<\delta\]
for all ${x\in G}$, where $U(\hh)$ denotes the group of unitary operators acting on $\hh$,  $\|\cdot\|$ denotes the operator norm on the algebra $B(\hh)$ of bounded operators on $\hh$, and a map ${\p\colon G\to U(\hh)}$ is said to be an \emph{$\e$-representation} of $G$  if
\[
\|\p(xy)-\p(x)\p(y)\|\leq \e
\] 
for all $x,y\in G$.

It is a well--known theorem of Kazhdan \cite[Theorem 1]{kazhdan82} that every countable discrete amenable group  \cite{Greenleaf} is Ulam stable. The converse is an open problem (see in particular  \cite[Question 1.5]{dcot19}).

Ulam stability can also be formulated in terms of positive definite maps. 
Namely,  the following two assertions are equivalent (see \cite[Prop.\ 1.2]{stable}) for every group $G$ and Hilbert space $\hh$:
\begin{enumerate}
\item[(0)]  there exist real numbers $\kappa>0$ and $0<\delta<1$ such that for every ${0<\e<\delta}$ and every unitary $\e$-representation ${\p\colon G\to U(\hh)}$  there exists a unitary representation ${\pi\colon G\to U(\hh)}$ such that  
\[
\|\p(x)-\pi(x)\|\leq\kappa\e
\]
for all $x\in G$;
\item[(0')] there exist real numbers $\kappa_1,\kappa_2>0$, $0<\delta<1$, and $p>1$ such that for every $0<\e<1$ such that $\kappa_1\e^{p-1}\leq\delta^{p-1}$   and every $\e$-representation ${\p\colon G\to U(\hh)}$  there exists a unital positive definite ${\kappa_1\e^p}$-representation ${\psi:G\to B(\hh)}$ such that 
\[
\norm{\p(x)-\psi(x)}\leq\kappa_2\e
\]
for all ${x\in G}$.
\end{enumerate}

\noindent  In this equivalence, the first assertion is a variation on  Ulam stability,  in which $\e$ is required to depend linearly on $\delta$, and the second assertion can be understood as an Ulam stability condition for sufficiently multiplicative positive definite maps on the group $G$.

In the present paper we propose a new stability condition for positive definite maps on $G$ and prove that it characterizes amenability.   This extends the results of \cite{stable} and  provides further justification for the shift in emphasis from unitary representations to positive definite maps. 

%We recall Stinespring's dilation theorem according to which if $\p\colon G\to B(\hh)$ is a positive definite map, then there exists a Hilbert space $\hk$ and a representation $\pi\colon G\to U(\hk)$ such that 
%\[
%\p(x)= V^*\pi(x)V, \ \ \forall x\in G,
%\]
%where $V\colon \hh\to \hk$ is a bounded operator.

Let us first quote results from \cite{stable} to motivate the new condition. 
A map $\p\colon G\to B(\hh)$ is said to be \emph{uniformly bounded} if 
\[
{\|{\p}\|:=\sup_{x\in G}\,\|{\p(x)}\|<\infty}.
\]   
The following characterization of amenability is part of Theorem 1.3 of \cite{stable}.

\begin{theorem}\label{T - stable main}
The following are equivalent: 
\begin{enumerate}
\item $G$ is amenable;
 \item there exists a state ${\tau\in\mathrm{St}(\ell^\infty(G))}$ such that for  every uniformly bounded map ${\p:G\to B(\hh)}$, there exists a positive definite map ${\psi:G\to B(\hh)}$ such that 
\[
{\Phi(\p(x)^*\,\psi(x))=\tau_y\,\Phi(\p(x)^*\,\p(xy)\,\p(y)^*)}
\]
for all ${x\in G}$ and every normal linear functional ${\Phi\in B(\hh)_*}$.
\end{enumerate}
\end{theorem}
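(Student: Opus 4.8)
The plan is to prove the two implications separately. For $(1)\Rightarrow(2)$ I would produce $\psi$ by an explicit averaging formula against an invariant mean, and for $(2)\Rightarrow(1)$ I would show that any state $\tau$ satisfying $(2)$ is forced to be left--invariant, so that amenability follows from the standard mean--theoretic characterization.

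For $(1)\Rightarrow(2)$: assume $G$ is amenable, fix a left--invariant mean $\tau$ on $\ell^\infty(G)$, and let $\p\colon G\to B(\hh)$ be uniformly bounded. I would define $\psi(x)\in B(\hh)=(B(\hh)_*)^*$ by $\Phi(\psi(x)):=\tau_y\,\Phi\big(\p(xy)\p(y)^*\big)$ for $\Phi\in B(\hh)_*$; since $y\mapsto\Phi(\p(xy)\p(y)^*)$ lies in $\ell^\infty(G)$ with norm $\le\norm{\Phi}\,\norm{\p}^2$, this is a genuine bounded operator with $\norm{\psi(x)}\le\norm{\p}^2$. The displayed identity is then automatic, because for $\Phi\in B(\hh)_*$ the functional $T\mapsto\Phi(\p(x)^*T)$ is again in $B(\hh)_*$ (left multiplication by a fixed operator is $\sigma$--weakly continuous), so that $\Phi(\p(x)^*\psi(x))=\tau_y\,\Phi(\p(x)^*\p(xy)\p(y)^*)$. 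The one point requiring care is that $\psi$ is positive definite: for finite tuples $(x_i)$ in $G$ and $(v_i)$ in $\hh$, writing each $T\mapsto\ip{Tv_j,v_i}$ as an element of $B(\hh)_*$ gives $\sum_{i,j}\ip{\psi(x_i^{-1}x_j)v_j,v_i}=\sum_{i,j}\tau_y\,\ip{\p(x_i^{-1}x_jy)\p(y)^*v_j,v_i}$, and performing the substitution $y\mapsto x_j^{-1}y$ in the $(i,j)$ term (legitimate by left--invariance of $\tau$) rewrites this as $\tau_y\big[\sum_{i,j}\ip{\p(x_j^{-1}y)^*v_j,\p(x_i^{-1}y)^*v_i}\big]=\tau_y\big[\,\norm{\sum_j\p(x_j^{-1}y)^*v_j}^2\,\big]\ge 0$, since $\tau$ is a state and the integrand is a nonnegative function on $G$.

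For $(2)\Rightarrow(1)$: fix a state $\tau$ as in $(2)$ and specialize to $\hh=\CI$, so $B(\hh)=\CI$ and every element of $B(\hh)_*$ is scalar multiplication. Fix $A\subseteq G$, and for $z\in\CI$ with $\abs{z}<1$ apply $(2)$ to the uniformly bounded, nowhere--vanishing map $\p:=1+z\,1_A$. Cancelling the nonzero scalar $\overline{\p(x)}$ in the identity of $(2)$ forces $\psi(x)=\tau_y\big[\p(xy)\overline{\p(y)}\big]$, which by linearity of $\tau$ and $1_A(xy)=1_{x^{-1}A}(y)$ equals $1+\bar z\,\tau(A)+z\,\tau(x^{-1}A)+z\bar z\,\tau(x^{-1}A\cap A)$, a polynomial in $z,\bar z$ with real coefficients. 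Positive--definiteness of $\psi$ gives $\psi(x^{-1})=\overline{\psi(x)}$ (Hermicity of the $2\times2$ block indexed by $\{e,x\}$); since $\psi(x^{-1})=1+\bar z\,\tau(A)+z\,\tau(xA)+z\bar z\,\tau(xA\cap A)$ while $\overline{\psi(x)}=1+z\,\tau(A)+\bar z\,\tau(x^{-1}A)+z\bar z\,\tau(x^{-1}A\cap A)$, and a polynomial in $z,\bar z$ vanishing near the origin is identically zero, comparison of the coefficients of $\bar z$ yields $\tau(x^{-1}A)=\tau(A)$. As $x\in G$ and $A\subseteq G$ were arbitrary, $\tau$ is left--invariant, hence a left--invariant mean, so $G$ is amenable.

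I expect the only genuinely inventive step to be the choice of test map in $(2)\Rightarrow(1)$: passing to scalars and to the nowhere--vanishing perturbation $1+z\,1_A$ of the constant function is exactly what makes $\psi$ \emph{determined} by $\tau$, so that the Hermitian symmetry built into positive--definiteness becomes readable as left--invariance of $\tau$. The remaining ingredients are routine: checking that the weak-$*$ average is a bounded operator and that $T\mapsto\Phi(\p(x)^*T)$ stays normal in $(1)\Rightarrow(2)$, where left--invariance of $\tau$ is used precisely once, to turn $\sum_{i,j}\ip{\p(x_i^{-1}x_jy)\p(y)^*v_j,v_i}$ into a square.
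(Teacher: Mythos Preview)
Your proof is correct, but there is nothing in this paper to compare it against: Theorem~\ref{T - stable main} is not proved here---it is quoted verbatim from the companion paper \cite{stable} (``part of Theorem~1.3 of \cite{stable}'')---so the present paper contains no argument for it.

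It is still worth contrasting your route for $(2)\Rightarrow(1)$ with the techniques the paper deploys for its own, closely related, implications in \S\ref{S - Tarski}. Your approach is purely analytic: specialize to $\hh=\CI$, test against the one--parameter family $\p=1+z\,1_A$ so that $\psi$ is \emph{forced} to equal $\tau_y\big(\p(xy)\overline{\p(y)}\big)$, and then read off left--invariance of $\tau$ from the Hermitian symmetry $\psi(x^{-1})=\overline{\psi(x)}$ by matching coefficients in $z,\bar z$. When the paper establishes the analogous direction $(8)\Rightarrow(1)$, it instead either invokes Tarski's paradoxical--decomposition characterization of non--amenability together with \cite[Lemma~6.1]{stable} to derive a contradiction, or (Remark~\ref{R - Tarski remark}) passes to a weak$^*$ cluster point of a net of states and appeals to \cite[Corollary~6.5]{stable}. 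Your perturbation trick is more self--contained, avoiding both Tarski's theorem and any compactness argument; the paper's route, by contrast, is designed to cope with the weaker hypothesis in~(8), where the conditional expectation may depend on the finite family $\p_1,\dots,\p_n$ rather than being a single state valid for all~$\p$. For $(1)\Rightarrow(2)$ your averaging construction is the standard one; the paper simply cites \cite{dcot19} for it in the proof of $(1)\Rightarrow(8)$.
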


 The discussion of this result in \cite{stable} isolates two important characteristics of Condition (2):

\begin{enumerate}
\item[(A)] the fact that the state $\tau$ in  (2) provides an invariant mean on a group $G$; in fact,  Condition (2) can be viewed as a characterization of the invariance of a mean  in terms of positive definite maps;
\item[(B)]  the fact that (2) applies to all uniformly bounded maps, as opposed to restricting to  $\e$-representations.
\end{enumerate}

\noindent We look for an  Ulam-type  stability condition for positive definite maps which takes these directions into account. By Problem (A) and Problem (B), we mean the formulation of a condition characterizing amenability analogous to (2) which  does not refer to the state $\tau$, and,  respectively,  restricts to $\e$-representations.

Considering the equality 
\[
{\Phi(\p(x)^*\,\psi(x))=\tau_y\,\Phi(\p(x)^*\,\p(xy)\,\p(y)^*)},\ \  \forall {x\in G},\ \  \forall{\Phi\in B(\hh)_*}
\]
in relation with Problem (B), the following result was established in \cite{stable}.  

\begin{theorem} 
Let $G$ be a group and $\hh$ be a Hilbert space.
Suppose that there exists a state ${\tau\in\mathrm{St}(\ell^\infty(G))}$ and $0<\delta<1$ such that for every   $\delta$-representation ${\p:G\to U(\hh)}$, there exists a positive definite map ${\psi:G\to B(\hh)}$ such that 
\[
{\Phi(\p(x)^*\,\psi(x))=\tau_y\,\Phi(\p(x)^*\,\p(xy)\,\p(y)^*)}
\]
for all ${x\in G}$ and every normal linear functional ${\Phi\in B(\hh)_*}$. 

Then $G$ is Ulam stable.
\end{theorem}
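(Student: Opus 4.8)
The plan is to show that the hypothesis forces every $\e$-representation of $G$, for $\e$ below a fixed threshold, to lie uniformly within $f(\e)$ of a genuine unitary representation for some $f$ with $f(\e)\to0$ as $\e\to0$; after renaming $\e$ this is exactly Ulam stability. So fix the state $\tau$ and the constant $\delta$ furnished by the hypothesis, and let $\p\colon G\to U(\hh)$ be an $\e$-representation with $0<\e<\delta$.

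First I would unwind the conclusion of the hypothesis. Since the identity $\Phi(\p(x)^*\psi(x))=\tau_y\,\Phi(\p(x)^*\p(xy)\p(y)^*)$ is imposed for \emph{every} normal functional $\Phi\in B(\hh)_*$, it determines the operator $\p(x)^*\psi(x)$, hence, as $\p(x)$ is unitary, it determines $\psi$ itself:
\[
\psi(x)=\tau_y\bigl(\p(xy)\,\p(y)^*\bigr),
\]
the weak-$*$ operator integral of $y\mapsto\p(xy)\p(y)^*$ against $\tau$. Two elementary facts then follow. From the $\e$-representation inequality and unitarity, $\norm{\p(xy)\p(y)^*-\p(x)}=\norm{(\p(xy)-\p(x)\p(y))\p(y)^*}\le\e$ for all $x,y$, so, $\tau$ being a state, $\norm{\psi(x)-\p(x)}\le\e$ for all $x$; and $\psi(e)=\tau_y(\p(y)\p(y)^*)=\tau_y(I)=I$. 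Thus the hypothesis attaches to each $\e$-representation $\p$ a \emph{unital positive definite} map $\psi$ with $\norm{\psi-\p}\le\e$ — a linear-defect cousin of Condition (0') of the introduction (the multiplicative defect of $\psi$ is only $O(\e)$, not $o(\e)$, so $\psi$ itself is not yet a genuine representation).

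Next I would realise $\psi$ through its minimal Naimark dilation: $\psi(x)=W^*\pi(x)W$ with $W\colon\hh\to\hk$ an isometry and $\pi\colon G\to U(\hk)$ a genuine unitary representation. Using $\norm{\psi-\p}\le\e$ and the relation $\psi(x)^*=\psi(x^{-1})$ one estimates $\norm{I-\psi(x^{-1})\psi(x)}=O(\e)$, whence the off-diagonal term $\norm{(I-WW^*)\pi(x)W}=O(\sqrt\e)$; combined with $\norm{WW^*(\pi(x)W-W\p(x))}=\norm{W(\psi(x)-\p(x))}\le\e$ this gives that $W$ is an approximate intertwiner,
\[
\norm{\pi(x)W-W\p(x)}=O(\sqrt\e)\qquad(x\in G).
\]
Equivalently the projection $Q:=WW^*$ almost commutes with $\pi$, so $\ran W$ is an almost-$\pi$-invariant isometric copy of $\hh$ on which $\pi$ compresses to $\psi$, hence to within $\e$ of $\p$.

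The remaining, and principal, step is to turn this picture — a genuine representation $\pi$ on the enlargement $\hk$ together with an almost-invariant isometric copy of $\hh$ on which $\pi$ restricts to (nearly) $\p$ — into a genuine unitary representation of $G$ \emph{on $\hh$ itself} lying uniformly close to $\p$, with error controlled by some $f(\e)\to0$. I expect this to be the real obstacle, and to be where the precise nature of the mean $\tau$ enters: it is not enough that $\tau$ be merely \emph{some} mean, since averaging $Q$ over $G$ against a non-invariant mean does not improve its invariance, so the positive-definiteness of $\psi$ built into the hypothesis must be exploited — presumably by iterating the dilation step so as to drive the $O(\sqrt\e)$ defect to $0$, or by a positive-definiteness–sensitive averaging that corrects $Q$ toward a genuinely $\pi$-invariant projection and thereby cuts out a genuine subrepresentation matched with $\p$. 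Once a control $\delta=f(\e)$ of this form is in hand, one invokes the equivalence of Conditions (0) and (0') recorded in the introduction to conclude that $G$ is Ulam stable.
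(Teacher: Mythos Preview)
The paper does not itself prove this theorem: it is quoted from the companion paper \cite{stable} (``the following result was established in \cite{stable}''), so there is no proof here to compare against.

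That said, your proposal is not a proof but a sketch that stops precisely at the hard step, and the way you propose to close the gap does not work. You correctly unwind the hypothesis to obtain $\psi(x)=\tau_y(\p(xy)\p(y)^*)$, unital, positive definite, with $\|\psi-\p\|\le\e$; and the Naimark computation giving an isometry $W$ and a genuine representation $\pi$ on $\hk$ with $\|\pi(x)W-W\p(x)\|=O(\sqrt\e)$ is fine. The problem is the descent to $\hh$. An almost-$\pi$-invariant projection $Q=WW^*$ does not, for a general group, lie near a genuinely invariant one; the standard way to correct $Q$ is to average it over $G$, and that is exactly what requires an invariant mean, i.e.\ amenability---which you are not entitled to assume. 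Your two proposed fixes are not arguments: ``iterating the dilation step'' does not reduce the defect (if $\p$ is an $\e$-representation then your $\psi$ is only an $O(\e)$-representation, the polar part of $\psi$ is again only an $O(\e)$-representation, and reapplying the hypothesis reproduces the same order of error), and ``positive-definiteness--sensitive averaging'' is a label, not a construction.

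Your final appeal to the equivalence $(0)\Leftrightarrow(0')$ is also misplaced. Condition $(0')$ demands a unital positive definite $\kappa_1\e^{p}$-representation with $p>1$; you have produced one with multiplicative defect $O(\e)$, i.e.\ the case $p=1$, which is exactly the case in which the iteration underlying $(0')\Rightarrow(0)$ fails to converge. So invoking that equivalence does not rescue the argument; the missing ingredient is whatever mechanism in \cite{stable} actually bridges the gap, and your sketch does not supply it.
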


Here is a tentative formulation of an Ulam-type stability condition for positive definite maps:
\begin{enumerate}
\item[(3)] for every ${\e>0}$, every Hilbert space $\hh$, and every unitary $\e$-representation ${\varphi:G\to U(\hh)}$, there exists a positive definite map ${\psi:G\to B(\hh)}$ such that 
\[
\norm{\p(x)-\psi(x)}\leq\e
\]
for all $x\in G$.
\end{enumerate}

Note that Condition (3) does not refer to a state on $\ell^\infty(G)$ and the statement applies to $\e$-representations ${\varphi:G\to B(\hh)}$ for all $\e>0$.

\begin{question} Does there exist a non-amenable countable discrete group $G$ for which Condition (3) holds?
\end{question}

Thus, a negative answer to this question would resolve both Problem (A)  and Problem (B).

 Condition (3) should be compared with the conclusion of Kazhdan's theorem in the case of unitary representations, by which we mean the following assertion (which corresponds to Condition (0) and Condition (4) in \cite[Prop.\ 1.2]{stable} at $\kappa=2$):
\begin{itemize} 
\item[]  there exists a real number $0<\delta<1$ such that for every ${0<\e<\delta}$, every Hilbert space $\hh$, and every unitary $\e$-representation ${\p\colon G\to U(\hh)}$,  there exists a unitary representation ${\pi\colon G\to U(\hh)}$ such that  
\[
\|\p(x)-\pi(x)\|\leq2\e
\]
for all $x\in G$
\end{itemize}

We shall make 
some remarks  in particular on  the ``numerical'' discrepancies between the above assertion for unitary maps and Condition (3) for positive definitive maps.

The first remark concerns the constant $\kappa=2$ (in the notation of \cite[Prop.\ 1.2(4)]{stable}) in the conclusion of Kazhdan's theorem. This constant is not intrinsic to the group $G$ and it can be removed   by using positive definite maps  (namely, the fact that  $G$ is amenable implies (3) with constant 1).

The second remark concerns the constant $\delta$ (with respect to $\kappa=2$). Kazhdan proves that $\delta=\frac 1 {200}$ is small enough. The corresponding ``Ulam stability gap'', namely, the interval $[\delta,1]$, can be reduced, for example by using the techniques developed in \cite{dc18,dcot19}. Thus, one may choose $\delta = \frac 1 4$ for instance (see \cite{msct}). (Since $\kappa=2$, stability is tautological for $\e>1$.) This constant is also unnecessary in the case of positive definite maps (namely, the fact that  $G$ is amenable  implies (3) without imposing a restriction on the values of $\e$).

One may introduce several variations on Condition (3). 
For example, the following formulation was put forward  in \cite[Question 8.3]{stable}:
\begin{enumerate}
\item[(4)] for every Hilbert space $\hh$ and every unitary map ${\varphi:G\to U(\hh)}$, there exists a positive definite map ${\psi:G\to B(\hh)}$ such that 
\[
{\norm{\varphi(x)-\psi(x)}\leq\sup_{y\in G} \norm{\varphi(x)\varphi(y)-\varphi(xy)}}
\]
for all $x\in G$.
\end{enumerate}

Question 8.3 in \cite{stable}, which is open, asks if this condition characterizes amenability.
An equivalence between the amenability of $G$ and Condition (4) would resolve Problem (A), and would partially resolve  Problem (B) in the sense that  (4) ensures that $\psi$ is close to $\p$ in the case that 
$\p$ is an $\e$-representation.

In the present paper, we consider a variation  of Condition (4) in which the ``proximity'' relation between $\psi$ and $\p$ is expressed in the strong operator topology, rather than  the operator norm. Furthermore, we prove that the new formulation is equivalent to amenability.

This  resolves Problem (A), because Condition (5) does not make reference to a state on $\ell^\infty(G)$, and this resolves Problem (B) partially, in the sense that Condition (5) ensures that $\psi$ is close to $\p$---in the operator norm---in the case that 
$\p$ is an $\e$-representation. 

Our main result can be stated as follows.

\begin{theorem}\label{T - main theorem}
Let $G$ be a countable discrete group. 
 The following are equivalent:
 \begin{enumerate}
 \item $G$ is amenable;
 \item[(5)] for every (separable) Hilbert space $\hh$ and  every uniformly bounded map ${\varphi:G\to B(\hh)}$,  there exists a positive definite map ${\psi:G\to B(\hh)}$ such that for every finite set $F\subset G$, every integer  $n\geq 1$, and every finite family of functions  $\xi_1,\ldots, \xi_n,\zeta_1,\ldots \zeta_n\colon F\to \hh$, there exists  a $y\in G$ such that 
\begin{align*}
\sum_{i=1 }^n\sum_{x\in F }\norm{\varphi(xy)\,\varphi(y)^*\,\xi_i(x)-\psi(x)\,\xi_i(x)}^2&\\
&\hskip-4cm \leq\sum_{i=1 }^n\sum_{x\in F }\norm{\varphi(xy)\,\varphi(y)^*\,\xi_i(x)-\zeta_i(x)}^2.
\end{align*}
\end{enumerate}
\end{theorem}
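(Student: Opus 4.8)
The plan is to prove the equivalence in two directions, with the implication $(1)\Rightarrow(5)$ being the more substantial one. For $(5)\Rightarrow(1)$, I would feed a carefully chosen uniformly bounded map $\p$ into Condition (5)---the natural candidate is a genuine unitary representation, say the left regular representation $\lambda$ on $\ell^2(G)$, or better a $\p$ built from a non-amenable action so as to force a contradiction. Given the positive definite map $\psi$ produced by (5), I would extract from the variational inequality, by specializing the families $\xi_i,\zeta_i$, the relation that for each $x$ and each finite configuration there is a $y\in G$ with $\psi(x)\xi$ exactly equal to (or dominated by) $\p(xy)\p(y)^*\xi$ in the relevant finite-dimensional sense; taking $\zeta_i(x)=\p(xy_0)\p(y_0)^*\xi_i(x)$ for a competing $y_0$ shows the right-hand side can be made $0$, hence the left-hand side is $0$, which pins down $\psi(x)\xi_i(x)=\p(xy)\p(y)^*\xi_i(x)$ on the nose for that $y$. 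This should say that $\psi$ lies in the \emph{pointwise-weak closure of the set of ``averaged conjugates''} $\{x\mapsto \p(xy)\p(y)^*: y\in G\}$, and the existence of such a positive definite $\psi$ agreeing in this averaged sense with an arbitrary uniformly bounded $\p$ is exactly the content of Condition (2) of Theorem \ref{T - stable main} once one recognizes the inequality as encoding a finitely-additive invariant mean; so I would reduce $(5)\Rightarrow(1)$ to (the hard direction of) Theorem \ref{T - stable main}, i.e. to $\neg(1)\Rightarrow\neg(5)$, using a paradoxical decomposition of $G$ to build a uniformly bounded $\p$ for which no positive definite $\psi$ can satisfy the inequality.

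For $(1)\Rightarrow(5)$, assume $G$ is amenable and let $(\mu_\alpha)$ be a F\o lner net, or rather work with the invariant mean $\tau$ supplied by Theorem \ref{T - stable main}(2). Given a uniformly bounded $\p:G\to B(\hh)$, Theorem \ref{T - stable main} already produces a positive definite $\psi:G\to B(\hh)$ with
\[
\Phi(\p(x)^*\psi(x))=\tau_y\,\Phi(\p(x)^*\p(xy)\p(y)^*)
\]
for all $x$ and all normal $\Phi$; equivalently $\psi(x)=\tau_y\bigl(\p(xy)\p(y)^*\bigr)$ in the weak-$*$ sense (here I would first apply Theorem \ref{T - stable main} to $\p$, or to a suitable modification of it so that the resulting $\psi$ is again uniformly bounded). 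The claim is that this same $\psi$ satisfies the variational inequality of (5). Fix $F$, $n$, and $\xi_i,\zeta_i:F\to\hh$. Expanding the squared norms, the inequality
\[
\sum_{i,x}\norm{\p(xy)\p(y)^*\xi_i(x)-\psi(x)\xi_i(x)}^2\le \sum_{i,x}\norm{\p(xy)\p(y)^*\xi_i(x)-\zeta_i(x)}^2
\]
is equivalent to
\[
\sum_{i,x}\Bigl(2\Re\ip{\p(xy)\p(y)^*\xi_i(x),\,\psi(x)\xi_i(x)-\zeta_i(x)}-\norm{\psi(x)\xi_i(x)}^2+\norm{\zeta_i(x)}^2\Bigr)\ge 0.
\]
Now average this expression over $y$ against $\tau$: the only $y$-dependent term is $\p(xy)\p(y)^*\xi_i(x)$, and $\tau_y\ip{\p(xy)\p(y)^*\xi_i(x),v}=\ip{\psi(x)\xi_i(x),v}$ by the defining property of $\psi$ (applied with $\Phi=\ip{\,\cdot\,\xi_i(x),v}$, using that such functionals span a norm-dense subspace of $B(\hh)_*$ and $\p$ is uniformly bounded so the family $\{\p(xy)\p(y)^*\}$ is bounded). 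Hence the $\tau$-average of the left-hand side equals
\[
\sum_{i,x}\Bigl(2\Re\ip{\psi(x)\xi_i(x),\psi(x)\xi_i(x)-\zeta_i(x)}-\norm{\psi(x)\xi_i(x)}^2+\norm{\zeta_i(x)}^2\Bigr)=\sum_{i,x}\norm{\psi(x)\xi_i(x)-\zeta_i(x)}^2\ge 0.
\]
Since the $\tau$-average of a bounded real-valued function of $y$ is nonnegative, the function itself cannot be everywhere negative; more precisely, for any $\eta>0$ the set $\{y:\ \text{(the expression)}\ge -\eta\}$ has $\tau$-measure $1$ hence is nonempty, and by compactness/a limiting argument over a countable exhausting sequence of $(F,n,\xi_i,\zeta_i)$-data one produces an actual $y\in G$ making the expression $\ge 0$, i.e. the desired inequality. (Alternatively, replace $\tau$ by a F\o lner net and choose $y$ inside a large F\o lner set where the finite average is close to the mean; this avoids any appeal to nonstandard analysis.)

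The main obstacle is the passage from the ``average is nonnegative'' statement to the existence of a single $y\in G$ realizing the inequality \emph{exactly}, not just up to $\eta$; Condition (5) demands one $y$ that works for the given finite data with no slack. I expect to handle this by the F\o lner-net formulation: for a fixed finite dataset the quantity to be controlled is a finite sum of functions $y\mapsto g_j(y)$ each of the form $\Re\ip{\p(xy)\p(y)^*\xi,v}+c$, and since $G$ is amenable one can find a finite set $S\subset G$ on which the empirical average of $\sum_j g_j$ exceeds its mean minus any prescribed error; but to get the average \emph{exactly} nonnegative I would instead argue that the set of $y$ for which $\sum_j g_j(y)\ge 0$ is nonempty because its complement, if it were all of $G$, would force the mean to be negative, contradicting the computation above---here it is essential that the $\tau$-average equals $\sum_{i,x}\norm{\psi(x)\xi_i(x)-\zeta_i(x)}^2$, which is a sum of squares and hence $\ge 0$, with no room for the strict-inequality-only escape. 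A secondary technical point is ensuring $\psi$ produced by Theorem \ref{T - stable main} is itself uniformly bounded (so that all the weak-$*$ manipulations with normal functionals are legitimate); this follows because $\tau$-averaging a uniformly bounded family of operators yields an operator of norm at most $\sup_x\norm{\p}^2$, which I would record as a preliminary lemma before the main argument.
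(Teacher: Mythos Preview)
Your direction $(1)\Rightarrow(5)$ is correct and, in fact, more direct than the paper's own route. The paper proves $(1)\Rightarrow(6)\Rightarrow(5)$, where $(6)$ asserts SOT-convergence of $\int_G\p(xy)\p(y)^*\,d\mu_\alpha(y)$ to $\psi(x)$ along a net in $\Prob(G)$, and the passage $(6)\Rightarrow(5)$ uses a new characterization of SOT-closed convex hulls (Theorem~\ref{ T - SOT hull}). Your argument bypasses this: define $\psi(x)=\IE_y\,\p(xy)\p(y)^*$ via the invariant conditional expectation, expand the squares, and observe that the $\tau$-average of the difference equals $\sum_{i,x}\|\psi(x)\xi_i(x)-\zeta_i(x)\|^2\ge 0$. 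Your worry about getting an \emph{exact} $y$ is unnecessary: if the average is strictly positive then $\sup_y f(y)>0$; if the average is zero then $\zeta_i(x)=\psi(x)\xi_i(x)$ for all $i,x$, and the inequality in $(5)$ becomes an identity for every $y$. No limiting or compactness argument is needed here.

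Your direction $(5)\Rightarrow(1)$, however, has a genuine gap. The step ``taking $\zeta_i(x)=\p(xy_0)\p(y_0)^*\xi_i(x)$ shows the right-hand side can be made $0$'' is wrong: the $y$ in $(5)$ is existentially quantified \emph{after} the $\zeta_i$ are chosen, so the right-hand side is $\sum_{i,x}\|\p(xy)\p(y)^*\xi_i(x)-\p(xy_0)\p(y_0)^*\xi_i(x)\|^2$ for some $y$ you do not control, and there is no reason it should vanish. What the variational inequality in $(5)$ actually encodes is that the block-diagonal operator $(\psi(x))_{x\in F}$ lies in the SOT-closed convex hull of $\{(\p(xy)\p(y)^*)_{x\in F}:y\in G\}$; recognizing and proving this is precisely the content of the paper's Theorem~\ref{ T - SOT hull}, established via a median-hyperplane separation lemma in $\hh^{\oplus n}$. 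From there one extracts a net in $\Prob(G)$ (Condition~(6)), upgrades to finitely many $\p$'s simultaneously by a direct-sum trick (Condition~(7)), passes to a conditional expectation by weak-$*$ compactness (Condition~(8)), and only then invokes Tarski's theorem (or the compactness argument of Remark~\ref{R - Tarski remark}). Your sketch jumps from $(5)$ to ``a finitely-additive invariant mean'' without supplying the convex-hull step or addressing the uniformity issue that $(5)$ produces a $\psi$ depending on $\p$, whereas amenability requires a single state working for all $\p$.
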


The rest of the paper is devoted to the proof of Theorem \ref{T - main theorem}.  
It relies on the results of \cite{stable},  Tarski's characterization of amenability (in terms of paradoxical decompositions \cite{Wagon}), and uses a new characterization of closed convex hulls for a set of bounded operators with respect to the strong operator topology (see \ts\ref{ S - convex}). 

We also provide an alternative argument relying on compactness rather than Tarski's theorem (see Remark \ref{R - Tarski remark}).

For the sake of completeness, let us prove here the remark we made before the statement of the main theorem: 

\begin{remark} Condition (5) implies that $\psi$ is close to $\p$ in the operator norm assuming that $\p$ is a unitary $\e$-representation.
\end{remark}

\begin{proof} Choose
\begin{enumerate}
\item[] $F=\{x\}$
\item[] $n=1$
\item[] $\zeta(x)=\p(x)\xi(x)$
\end{enumerate}
for all $x\in G$ and all $\xi\colon F\to \hh$. This implies
\[
\|\p(x)-\psi(x)\|\leq 2\e,
\]
for every $x\in G$, by taking the supremum over $\xi\colon F\to \hh$ of norm 1.
Note that $\kappa=2$ in this case.
\end{proof}

 \bigskip

\noindent\textbf{Acknowledgment.}   The authors are partially funded by NSERC Discovery Fund 234313. 

\toc

\section{SOT closed convex hulls}\label{ S - convex}
We shall begin with a new characterization of closed convex hulls in $B(\hh)$ with respect to  the strong operator topology:

\begin{theorem}\label{ T - SOT hull}
Let $\hh$ be a Hilbert space, $T\in B(\hh)$ be a bounded operator, $X$ be a nonempty set,  and ${\p:X\to B(\hh)}$ be a map. The following are equivalent:
\begin{enumerate}
\item $\forall \xi_1,\ldots, \xi_n\in \hh$, $\forall \eta_1,\ldots, \eta_n\in \hh$,  $\exists x\in X$ such that
\[
\sum_{i=1}^n\norm{\varphi(x)\,\xi_i-T\xi_i}^2\leq\sum_{i=1}^n\norm{\varphi(x)\,\xi_i-\eta_i}^2;
\]
\item  ${T\in\overline{\mathrm{conv}\{\varphi(x):x\in X\}}}$, where the closure is with respect to the strong operator topology.
\end{enumerate}
\end{theorem}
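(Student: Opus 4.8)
The plan is to prove the two implications separately, with the implication (2) $\Rightarrow$ (1) being the easy direction and (1) $\Rightarrow$ (2) requiring a Hilbert-space geometry / separation argument. For (2) $\Rightarrow$ (1): fix $\xi_1,\dots,\xi_n,\eta_1,\dots,\eta_n\in\hh$ and form the Hilbert space $\hh^n$ with the vector $\Xi=(\xi_1,\dots,\xi_n)$. The map $S\mapsto (S\xi_1,\dots,S\xi_n)$ sends $B(\hh)$ into $\hh^n$ and is SOT-to-norm... no — rather, it is continuous from the SOT on bounded sets... Actually the cleanest formulation: consider the function $f(S)=\sum_i\norm{S\xi_i-\eta_i}^2-\sum_i\norm{S\xi_i-T\xi_i}^2$, which after expansion is \emph{affine} in $S$ (the quadratic terms $\norm{S\xi_i}^2$ cancel), namely $f(S)=\sum_i\bigl(\norm{\eta_i}^2-\norm{T\xi_i}^2-2\Re\langle S\xi_i,\eta_i-T\xi_i\rangle\bigr)$. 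Since $T\in\overline{\conv}\{\p(x)\}$ in the SOT and $S\mapsto \langle S\xi_i,\zeta\rangle$ is SOT-continuous, $f$ attains at $T$ the value $f(T)=\sum_i\norm{\eta_i-T\xi_i}^2\geq 0$, and by affineness plus density $\sup_x f(\p(x))\geq f(T)\geq 0$; hence some $x$ gives $f(\p(x))\geq 0$, which is exactly the inequality in (1).

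For (1) $\Rightarrow$ (2): argue by contraposition. Suppose $T\notin C:=\overline{\conv}\{\p(x):x\in X\}$, a nonempty SOT-closed convex set. The SOT is the topology induced by the seminorms $S\mapsto\norm{S\xi}$, $\xi\in\hh$; a standard fact is that a convex set is SOT-closed iff it is closed in the locally convex topology, and the continuous linear functionals for this topology are exactly the finite sums $S\mapsto\sum_{i=1}^n\langle S\xi_i,\eta_i\rangle$ (equivalently, $S\mapsto\tr(AS)$ for finite-rank $A$; these are precisely the SOT-continuous linear functionals on $B(\hh)$). By the Hahn--Banach separation theorem applied to the locally convex space $B(\hh)$ with the SOT, there is an SOT-continuous real-linear functional strictly separating $T$ from $C$: there exist $\xi_1,\dots,\xi_n,\eta_1,\dots,\eta_n\in\hh$ and $c\in\IR$ with
\[
\Re\sum_{i=1}^n\langle \p(x)\xi_i,\eta_i\rangle \leq c < \Re\sum_{i=1}^n\langle T\xi_i,\eta_i\rangle\qquad\text{for all }x\in X.
\]
Now I would massage this into the desired form: replacing $\eta_i$ by $T\xi_i-\eta_i$ (relabeling) and using the affine identity from the first paragraph, the strict inequality $\Re\sum_i\langle \p(x)\xi_i,\,T\xi_i-\eta_i\rangle < \Re\sum_i\langle T\xi_i,\,T\xi_i-\eta_i\rangle$ rearranges to $\sum_i\norm{\p(x)\xi_i-T\xi_i}^2 > \sum_i\norm{\p(x)\xi_i-\eta_i}^2$ for every $x$ — wait, I must be careful with the sign, but since $f(S)$ is affine and $f(T)\geq0$ while separation gives $f(\p(x))<f(T)$ with a uniform strict gap, after adding a suitable multiple I can force $f(\p(x))<0$ for all $x$; adjusting $\eta_i$ absorbs the constant. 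This negates (1).

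The main obstacle — and the only real content — is pinning down precisely which linear functionals are SOT-continuous on $B(\hh)$, namely the finite sums $S\mapsto\sum_i\langle S\xi_i,\eta_i\rangle$, and verifying that Hahn--Banach separation in this locally convex topology yields a functional of exactly this shape (rather than, say, needing the strong-$*$ topology). I would either cite the standard functional-analysis fact (e.g.\ that the SOT-, weak-, and $\sigma$-weak-continuous functionals on a von Neumann algebra restricted to linear maps coincide appropriately on the relevant level, or simply the elementary lemma that a linear functional continuous for the seminorms $\norm{S\xi}$ is dominated by finitely many of them) or prove this lemma directly in a line: if $|\ell(S)|\leq M\max_{i\le n}\norm{S\xi_i}$ then $\ell$ factors through $S\mapsto(S\xi_1,\dots,S\xi_n)\in\hh^n$, and by Riesz representation $\ell(S)=\sum_i\langle S\xi_i,\eta_i\rangle$ for suitable $\eta_i$. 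With that in hand, everything else is the bookkeeping of the affine identity and choosing constants, which I would not belabor.
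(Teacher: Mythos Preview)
Your plan is correct. The affine-function observation for $(2)\Rightarrow(1)$ is clean (with the trivial caveat that when $f(T)=0$ one has $\eta_i=T\xi_i$ for all $i$ and the inequality holds tautologically, so strictly speaking $\sup_x f(\p(x))\geq 0$ alone does not give an $x$, but this case is vacuous). For $(1)\Rightarrow(2)$, the scaling you worry about does work: with the separating functional $\ell(S)=\Re\sum_i\langle S\xi_i,\zeta_i\rangle$ and $\eta_i=T\xi_i-t\zeta_i$, one computes $f_\eta(S)=-2t(\ell(T)-\ell(S))+t^2\sum_i\|\zeta_i\|^2$, which is strictly negative on all $\p(x)$ for $t>0$ small enough.

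The paper takes a somewhat different route. Rather than invoking Hahn--Banach in the locally convex space $(B(\hh),\mathrm{SOT})$ and identifying the SOT-continuous functionals, it first isolates a purely Hilbert-space lemma: for a map $f\colon X\to\hk$ into a Hilbert space and $\xi\in\hk$, one has $\xi\in\overline{\conv}\{f(x)\}$ if and only if for every $\eta\in\hk$ there is $x$ with $\|f(x)-\xi\|\leq\|f(x)-\eta\|$. This lemma is proved using the Hilbert projection theorem and median hyperplanes (no Hahn--Banach). The theorem then follows by applying the lemma in $\hk=\hh^{\oplus n}$ to $f(x)=(\p(x)\xi_1,\dots,\p(x)\xi_n)$ and $\xi=(T\xi_1,\dots,T\xi_n)$. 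The paper's approach is more elementary and self-contained, sidestepping the need to describe the SOT-dual; your approach is the standard locally-convex separation argument and makes the affine structure explicit, which is arguably more transparent about why the quadratic inequality in (1) is really a linear separation in disguise.
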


We require the following lemma which characterizes closed convex hulls in Hilbert spaces in terms  of closed half-spaces associated with certain \emph{orthogonal} (namely, median) hyperplanes.  It could be viewed as a sort of ``separation theorem'', although it is of course  restricted in scope compared to the Hahn--Banach theorem due to its Hilbertian nature.

\begin{lemma}\label{L - hyperplanes} Let $\hh$ be a Hilbert space, $\xi\in \hh$ be a vector,  $X$ be a nonempty set, and ${f:X\to\hh}$ be a map. The following are equivalent 
\begin{enumerate}
\item $\forall\eta\in\hh$, $\exists x\in X$ such that
\[
\norm{f(x)-\xi}\leq\norm{f(x)-\eta};
\]
\item ${\xi\in\overline{\mathrm{conv}\{f(x):x\in X\}}}$, where the closure is with respect to the norm topology. 

\end{enumerate}
\end{lemma}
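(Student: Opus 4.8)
The plan is to prove both implications directly, exploiting the Hilbert-space identity
\[
\norm{f(x)-\eta}^2 - \norm{f(x)-\xi}^2 = \norm{\xi-\eta}^2 + 2\Re\ip{f(x)-\xi,\,\xi-\eta},
\]
which shows that the inequality in (1) for a given $\eta$ is equivalent to $\Re\ip{f(x)-\xi,\,\xi-\eta}\geq -\tfrac12\norm{\xi-\eta}^2$, i.e.\ to $f(x)$ lying in the closed half-space bounded by the median hyperplane between $\xi$ and $\eta$ on the $\xi$-side. So (1) says: every closed half-space determined by such a median hyperplane and containing $\xi$ (on its boundary or interior) meets $\{f(x):x\in X\}$.

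For (2)$\Rightarrow$(1): suppose $\xi\in C:=\overline{\conv\{f(x):x\in X\}}$. Fix $\eta\in\hh$; I must produce $x\in X$ with $\Re\ip{f(x)-\xi,\,\eta-\xi}\leq\tfrac12\norm{\eta-\xi}^2$. Consider the real-linear functional $v\mapsto \Re\ip{v-\xi,\,\eta-\xi}$ on $\hh$. If it were $>\tfrac12\norm{\eta-\xi}^2$ at every point $f(x)$, then by linearity and continuity it would be $\geq\tfrac12\norm{\eta-\xi}^2 > 0$ on all of $C$ (the infimum over a convex set of a continuous affine functional that is bounded below on the generators is attained... more simply, it is $\geq$ that bound on $\conv$ and hence on the closure), contradicting its value $0$ at $\xi\in C$. (If $\eta=\xi$ the statement is trivial.) So some $f(x)$ satisfies the required inequality, giving (1).

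For (1)$\Rightarrow$(2): argue by contrapositive. If $\xi\notin C$, then since $C$ is a nonempty closed convex subset of the Hilbert space $\hh$, let $\eta_0=P_C(\xi)$ be the metric projection of $\xi$ onto $C$; then $\eta_0\neq\xi$ and the standard variational characterization gives $\Re\ip{v-\eta_0,\,\xi-\eta_0}\leq 0$ for all $v\in C$. Now set $\eta := 2\xi - \eta_0$ (the reflection of $\eta_0$ through $\xi$), so that $\xi$ is the midpoint of $\eta_0$ and $\eta$ and $\norm{\xi-\eta}=\norm{\xi-\eta_0}>0$. For any $v\in C$ one computes $\norm{v-\eta}^2-\norm{v-\xi}^2 = 2\Re\ip{v-\xi,\,\xi-\eta}+\norm{\xi-\eta}^2 = 2\Re\ip{v-\xi,\,\eta_0-\xi}+\norm{\xi-\eta_0}^2$; since $\Re\ip{v-\xi,\,\eta_0-\xi} = \Re\ip{v-\eta_0,\,\eta_0-\xi} + \norm{\eta_0-\xi}^2 \le -\norm{\eta_0-\xi}^2 + \norm{\eta_0 - \xi}^2$... let me instead just note $\Re\ip{v-\xi,\eta_0-\xi} = -\Re\ip{v-\eta_0,\xi-\eta_0} - \norm{\xi-\eta_0}^2 \le -\norm{\xi-\eta_0}^2$, so the difference is $\le -2\norm{\xi-\eta_0}^2 + \norm{\xi-\eta_0}^2 = -\norm{\xi-\eta_0}^2 < 0$. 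Thus $\norm{v-\eta}<\norm{v-\xi}$ for every $v\in C$, in particular for every $v=f(x)$, which negates (1). This completes the proof.

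I expect the only delicate point to be bookkeeping with the real parts and the midpoint/reflection construction; conceptually everything reduces to the metric projection onto a closed convex set and its obtuse-angle characterization, so there is no real obstacle. One should also handle the degenerate case $\eta=\xi$ in (2)$\Rightarrow$(1) separately (it is immediate, taking any $x\in X$, using $X\neq\emptyset$), and note that the case $X=\emptyset$ is excluded by hypothesis so that (1) is not vacuously satisfiable while $C=\emptyset$ fails (2).
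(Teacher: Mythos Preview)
Your argument for $(2)\Rightarrow(1)$ is correct and matches the paper's geometric idea.

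The argument for $(1)\Rightarrow(2)$ has a genuine error hidden by a sign slip. You take $\eta=2\xi-\eta_0$, the reflection of the projection $\eta_0=P_C(\xi)$ through $\xi$. But this point lies on the \emph{far} side of $\xi$ from $C$, so every $v\in C$ is closer to $\xi$ than to this $\eta$, which does not negate (1). Concretely, the identity is
\[
\Re\ip{v-\xi,\eta_0-\xi}=-\Re\ip{v-\eta_0,\xi-\eta_0}+\norm{\xi-\eta_0}^2
\]
(with a plus sign, not minus as you wrote), and since $\Re\ip{v-\eta_0,\xi-\eta_0}\le 0$ this gives $\Re\ip{v-\xi,\eta_0-\xi}\ge\norm{\xi-\eta_0}^2$, hence
\[
\norm{v-\eta}^2-\norm{v-\xi}^2=2\Re\ip{v-\xi,\eta_0-\xi}+\norm{\xi-\eta_0}^2\ge 3\norm{\xi-\eta_0}^2>0,
\]
the opposite of what you need.

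The fix is immediate: take $\eta=\eta_0$ itself. Then $\norm{v-\xi}^2-\norm{v-\eta_0}^2=-2\Re\ip{v-\eta_0,\xi-\eta_0}+\norm{\xi-\eta_0}^2\ge\norm{\xi-\eta_0}^2>0$ for every $v\in C$, so $\norm{f(x)-\eta_0}<\norm{f(x)-\xi}$ for all $x$, negating (1). This is exactly the choice the paper makes.
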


\begin{proof}
Let ${C=\overline{\mathrm{conv}\{f(x):x\in X\}}}$ and suppose that 
\[\forall\eta\in\hh,\ \exists x\in X:\norm{f(x)-\xi}\leq\norm{f(x)-\eta}\]
and ${\xi\notin C}$. Let $\eta$ denote the orthogonal projection of $\xi$ onto $C$. Clearly, $\eta\neq \xi$. The median hyperplane for $[\xi,\eta]$ divides $\hh$ into two disjoint open half-spaces; we denote by $H$ the open half-space which contains $\eta$. Since $\eta$ is the orthogonal projection of $\xi$ onto $C$, the closed convex set $C$ is included in $H$, and therefore
\[
\norm{\zeta-\eta}<\norm{\zeta-\xi}
\]
 for every $\zeta\in C$.
For ${x\in X}$ as above we obtain 
\[
\norm{f(x)-\xi}\leq\norm{f(x)-\eta}< \norm{f(x)-\xi}
\]
which is a contradiction.

Conversely,   suppose that 
\[
\exists\eta\in\hh, \forall x\in X:\norm{f(x)-\xi}>\norm{f(x)-\eta}
\]
and ${\xi\in C}$. Note that  $\xi\neq \eta$. Let $H$ denote the closed half-space of the median hyperplane for $[\xi,\eta]$ which contains $\eta$. Clearly, $f(x)\in H$ for every $x\in X$. Since $H$ is closed and convex, this implies that $C\subset  H$. 

Thus, $\xi\in  H$, which is a contradiction.
\end{proof}

\begin{proof}[Proof of Theorem \ref{ T - SOT hull}] 
Consider $T\in B(\hh)$ such that $\forall \xi_1,\ldots, \xi_n\in \hh$, $\forall \eta_1,\ldots, \eta_n\in \hh$,  $\exists x\in X$ such that
\[
\sum_{i=1}^n\norm{\varphi(x)\,\xi_i-T\xi_i}^2\leq\sum_{i=1}^n\norm{\varphi(x)\,\xi_i-\eta_i}^2.
\]
Fix $\xi_1,\ldots, \xi_n\in \hh$.  Consider $\xi =  (T\xi_1,\ldots, T\xi_n)\in\hh^{\oplus n}$ and let $\eta =  (\eta_1,\ldots, \eta_n)\in\hh^{\oplus n}$ be an arbitrary vector. By assumption, there exists $x\in X$ such that
\begin{align*}
\norm{f(x)-\xi}^2&= \sum_{i=1}^n\norm{\varphi(x)\,\xi_i-T\xi_i}^2 \\
&\leq \sum_{i=1}^n\norm{\varphi(x)\,\xi_i-\eta_i}^2 \\
&=\norm{f(x)-\eta}^2.
\end{align*}
By Lemma \ref{L - hyperplanes}, ${\xi\in\overline{\mathrm{conv}\{f(x):x\in X\}}}$,  and therefore there exists a net $(\mu_\alpha)$ of finitely supported probability measures on $X$ such that
\[
\int_X f\d\mu_\alpha \to \xi
\]
in the norm topology. Let 
\[
T_\alpha := \int_X \p\d\mu_\alpha\in \conv\{\p(x) : x\in X\}.
\]
Then
\[
\sum_{i=1}^n\|T_\alpha\xi_i-T\xi_i\|^2=\big\|\int_X f_\alpha\d\mu_\alpha - \xi\big\|^2\to 0.
\]
This shows that $T\in \overline{\mathrm{conv}\{\varphi(x):x\in X\}}$, where the closure is with respect to the strong operator topology.

Conversely,  suppose $T\in \overline{\mathrm{conv}\{\varphi(x):x\in X\}}$,  where the closure is with respect to the strong operator topology, and let
$\xi_1,\ldots, \xi_n\in \hh$, $ \eta_1,\ldots, \eta_n\in \hh$.  Define $f\colon X\to \hh^{\oplus n}$, $\xi,\eta\in\hh^{\oplus n}$ as above. Then  
\[
{\xi\in\overline{\mathrm{conv}\{f(x):x\in X\}}},
\] 
where the closure is with respect to the norm topology, and therefore, by Lemma \ref{L - hyperplanes}, there exists an $x\in X$ such that
\[
\norm{f(x)-\xi}\leq\norm{f(x)-\eta}.
\]
Therefore,
\[
\sum_{i=1}^n\norm{\varphi(x)\,\xi_i-T\xi_i}^2\leq\sum_{i=1}^n\norm{\varphi(x)\,\xi_i-\eta_i}^2
\]
establishing (1).
\end{proof}

\begin{remark}\label{R - SOT}
The above result may also be compared to the von Neumann bicommutant theorem, which characterizes the SOT closures of unital $*$-algebras; furthermore, it can be used (and was discovered) to answer certain questions in the majorization  theory, in particular regarding the self-adjointness assumption on unitary orbits. 
\end{remark} 

\section{Positive definite maps and Tarski's theorem}\label{S - Tarski}

The following characterization of amenability follows by combining the techniques developed in \cite{stable} and Tarski's theorem.

\begin{theorem}\label{T - Tarski positive}
Let $G$ be a countable discrete group. The following are equivalent:
\begin{enumerate}
\item $G$ is amenable
%:
\item[(6)] for every separable Hilbert space $\hh$ and every uniformly bounded map $\p\colon G\to B(\hh)$, there exists a net $(\mu_\alpha)_\alpha\subset \Prob(G)$ and a positive definite map $\psi\colon G\to B(\hh)$ such that
\[
\lim_{\alpha} \int_G \p(xy)\p(y)^*\d\mu_\alpha(y)= \psi(x)\ \ \ \ \forall x\in G
\]
where the limit is with respect to the strong operator topology.
\end{enumerate}
\end{theorem}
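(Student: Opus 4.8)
The plan is to prove the two implications separately. The forward implication $(1)\Rightarrow(6)$ is essentially a repackaging of an invariant mean together with the convexity phenomenon behind Theorem \ref{ T - SOT hull}; the converse $(6)\Rightarrow(1)$ goes through a compactness reduction to (a variant of) Condition (2) of Theorem \ref{T - stable main}, followed by Tarski's theorem.

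For $(1)\Rightarrow(6)$: let $m$ be a left-invariant mean on $\ell^\infty(G)$ and let $\p\colon G\to B(\hh)$ be uniformly bounded. Define $\psi(x)\in B(\hh)$ by the prescription $\Phi(\psi(x))=m_y\big(\Phi(\p(xy)\p(y)^*)\big)$ for every $\Phi\in B(\hh)_*$; since $y\mapsto\Phi(\p(xy)\p(y)^*)$ lies in $\ell^\infty(G)$ with norm at most $\|\Phi\|\,\|\p\|^2$, the right-hand side is a bounded linear functional on $B(\hh)_*$ and hence defines a unique operator $\psi(x)$ with $\|\psi(x)\|\le\|\p\|^2$. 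The map $\psi$ is positive definite: using left-invariance of $m$ to replace $y$ by $x_j^{-1}y$ one gets $\Phi(\psi(x_i^{-1}x_j))=m_y\big(\Phi(\p(x_i^{-1}y)\,\p(x_j^{-1}y)^*)\big)$, and therefore, for any $v_1,\dots,v_n\in\hh$,
\[
\sum_{i,j}\langle \psi(x_i^{-1}x_j)v_j,v_i\rangle = m_y\Big(\big\|\textstyle\sum_j\p(x_j^{-1}y)^*v_j\big\|^2\Big)\ge 0,
\]
because $m$ is a state. It remains to realize $\psi$ by a \emph{single} net of averages converging in SOT. Since $m$ is a weak-$*$ limit of a net $(\nu_\beta)$ of finitely supported elements of $\Prob(G)$, the operators $T_{\nu_\beta}(x):=\int_G\p(xy)\p(y)^*\,\d\nu_\beta(y)$ converge to $\psi(x)$ in the weak operator topology for each fixed $x$. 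Now work in the product $\prod_{x\in G}B(\hh)$ and consider the convex set $\mathcal D=\{(T_\mu(x))_{x\in G}:\mu\in\Prob(G)\}$; the tuple $(\psi(x))_{x}$ lies in the closure of $\mathcal D$ for the product weak operator topology. A convex subset of $\prod_{x\in G}B(\hh)$ has the same closure in the product-WOT and the product-SOT topologies, since both are locally convex with the same continuous linear functionals (finite sums of maps $(T_x)_x\mapsto\langle T_x\xi,\eta\rangle$); hence $(\psi(x))_x$ also lies in the product-SOT closure of $\mathcal D$, which yields a net $(\mu_\alpha)\subset\Prob(G)$ with $\int_G\p(xy)\p(y)^*\,\d\mu_\alpha(y)\to\psi(x)$ in SOT for all $x$.

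For $(6)\Rightarrow(1)$ I would argue contrapositively, assuming $G$ is non-amenable. Applying an arbitrary normal functional $\Phi$ to the SOT-convergence in (6) and passing to a weak-$*$ cluster point $\tau$ of the net $(\mu_\alpha)$ in the (compact) state space of $\ell^\infty(G)$, one sees that Condition (6), applied to a uniformly bounded $\p$, produces a state $\tau=\tau_\p\in\mathrm{St}(\ell^\infty(G))$ and a positive definite $\psi=\psi_\p$ with $\Phi(\psi(x))=\tau_y\,\Phi(\p(xy)\p(y)^*)$ for all $x$ and $\Phi$ --- in particular the conclusion of Condition (2) of Theorem \ref{T - stable main}, but with $\tau$ permitted to depend on $\p$. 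One then feeds this into the argument of \cite{stable} behind $(2)\Rightarrow(1)$ of Theorem \ref{T - stable main}, now combined with Tarski's theorem: non-amenability gives a paradoxical decomposition of $G$, from which one constructs a specific uniformly bounded (unitary) map $\p_0$ --- built from multiplication/permutation operators on $\ell^2$ of the $G$-set underlying the decomposition --- for which the only operator that could arise as an SOT-limit of the averages $\int_G\p_0(xy)\p_0(y)^*\,\d\mu(y)$ would encode a finitely additive left-invariant measure on $G$, contradicting paradoxicality. Hence no valid pair $(\tau_{\p_0},\psi_{\p_0})$ exists, contradicting (6), so $G$ must be amenable.

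The main obstacle is exactly this last step: exhibiting the explicit $\p_0$ attached to a paradoxical decomposition and checking that positive definiteness of an SOT-limit of its averages is genuinely incompatible with $G$ being paradoxical; everything else --- the operator-valued integral, the positive-definiteness computation, the quantifier swap by weak-$*$ compactness, and the passage from product-WOT to product-SOT closures of convex sets --- is routine given Theorem \ref{ T - SOT hull} and \cite{stable}. As noted after the statement, Tarski's theorem can be bypassed here (Remark \ref{R - Tarski remark}): in place of a paradoxical decomposition one exhausts $G$ by finite subsets, applies (6) to an increasing family of maps, and uses compactness of the state space of $\ell^\infty(G)$ to extract a mean that is invariant under arbitrarily large finite subsets of $G$, hence an invariant mean, so $G$ is amenable.
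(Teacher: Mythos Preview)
Your forward implication $(1)\Rightarrow(6)$ is correct and essentially what the paper does, only spelled out more explicitly: the paper routes through a conditional expectation (Condition~(8)) and the density of $\Prob(G)$ in $\St(\ell^\infty(G))$, which amounts to your weak-$*$ approximation together with the product-WOT/product-SOT convex-closure argument.

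For $(6)\Rightarrow(1)$, your plan and the paper's diverge precisely at the step you flag as the ``main obstacle.'' The paper does \emph{not} construct a single clever unitary $\p_0$ on $\ell^2$ encoding the paradoxical decomposition. Instead it first proves a bootstrap lemma: Condition~(6) for one map implies the same statement for \emph{finitely many} maps $\p_1,\dots,\p_n$ simultaneously, with the \emph{same} net $(\mu_\alpha)$. The trick is the direct sum --- set $\p(x)=\bigoplus_i \p_i(x)\in B(\hh^{\oplus n})$, apply (6) to this single map, and compress the resulting $\psi$ back to each summand. Once this is in hand, one takes $\hh=\CI$ and lets the $\p_i$ be the scalar maps $i^l\chi_G$, $i^l\chi_{A_j}$, $i^l\chi_{B_k}$ (for $0\le l<4$) built from the pieces of a paradoxical decomposition; the common state $\tau$ obtained as a weak-$*$ cluster point then satisfies $\tau(\lambda(s)\chi_{A_j})=\tau(\chi_{A_j})$ for all $s$ (this uses \cite[Lemma~6.1]{stable}, and is exactly why the powers of $i$ are included), and the usual Tarski contradiction $2=2\,\tau(\chi_G)\le\tau(\chi_G)=1$ follows.

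So your outline is on the right track, but the specific $\p_0$ you describe --- a unitary on $\ell^2$ of the $G$-set --- is vague and not what is needed; the missing idea is the direct-sum reduction $(6)\Rightarrow(7)$, after which everything happens in the scalar case $\hh=\CI$ with characteristic functions. The same remark applies to your compactness alternative: you cannot ``apply~(6) to an increasing family of maps'' and then take a cluster point of the resulting states until you know that a single state handles a finite family simultaneously, which is again the direct-sum step.
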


We subdivide the proof into three equivalences (6) $\ssi$ (7) $\ssi$  (8) $\ssi$ (1),  where  (7) and (8) are stated below. Since the group $G$ is countable, one may replace the net $(\mu_\alpha)$ by a sequence in these statements.  

\begin{lemma}
Let $G$ be a countable discrete group. The following are equivalent: 
\begin{enumerate}
\item[(6)] for every separable Hilbert space $\hh$ and  every uniformly bounded map $\p\colon G\to B(\hh)$, there exists a net $(\mu_\alpha)_\alpha\subset \Prob(G)$ and a positive definite map $\psi\colon G\to B(\hh)$ such that
\[
\lim_{\alpha} \int_G \p(xy)\p(y)^*\d\mu_\alpha(y)= \psi(x)\ \ \ \ \forall x\in G
\]
where the limit is with respect to the strong operator topology.
\item[(7)] for every separable Hilbert space $\hh$ and every finite set of uniformly bounded maps ${\varphi_1,\dots,\varphi_n\in\ell^\infty(G,B(\hh))}$, there exists a net ${(\mu_\alpha)}_\alpha\subset \Prob(G)$ and positive definite maps ${\psi_1,\dots,\psi_n\in\ell^\infty(G,B(\hh))}$ such that 
\[
\lim_{\alpha} \int_G \p_i(xy)\p_i(y)^*\d\mu_\alpha(y)= \psi_i(x)\ \ \ \ \forall x\in G\text { and }i=1,\ldots,n
\]
where the limit is with respect to the strong operator topology.
\end{enumerate}
\end{lemma}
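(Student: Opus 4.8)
The plan is as follows. The implication (7) $\impl$ (6) is immediate --- apply (7) with $n=1$ and $\p_1=\p$ --- so all of the work is in deriving (7) from (6). The idea I would use is to amalgamate the finitely many maps $\p_1,\dots,\p_n$ into a single uniformly bounded map on a larger Hilbert space, apply (6) to that map, and then decompose the resulting positive definite map block by block.

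Concretely, I would set $\hk=\hh^{\oplus n}$ (still separable since $n$ is finite), let $P_i\in B(\hk)$ be the orthogonal projection onto the $i$-th summand, and define the block-diagonal map $\p\colon G\to B(\hk)$ by $\p(x)=\bigoplus_{i=1}^n\p_i(x)$. Then $\p$ is uniformly bounded with $\norm{\p}=\max_i\norm{\p_i}$, and the product $\p(xy)\p(y)^*=\bigoplus_{i=1}^n\p_i(xy)\p_i(y)^*$ is again block diagonal with respect to the decomposition $\hk=\hh^{\oplus n}$. Applying (6) to $\p$ produces a net $(\mu_\alpha)_\alpha\subset\Prob(G)$ and a positive definite map $\psi\colon G\to B(\hk)$ with $\int_G\p(xy)\p(y)^*\d\mu_\alpha(y)\to\psi(x)$ in the strong operator topology for every $x\in G$. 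I would then define $\psi_i\colon G\to B(\hh)$ by compression, $\psi_i(x)=P_i\psi(x)P_i$ (identifying $P_i\hk$ with $\hh$), and claim that the single net $(\mu_\alpha)$ together with $\psi_1,\dots,\psi_n$ witnesses (7).

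It then remains to check three essentially routine points. First, that each $\psi_i$ is positive definite: for $g_1,\dots,g_m\in G$ and $\xi_1,\dots,\xi_m\in\hh$, regarding the $\xi_j$ as vectors in the $i$-th summand of $\hk$ (so that $P_i$ fixes them), one gets $\sum_{j,k}\ip{\psi_i(g_k^{-1}g_j)\xi_j,\xi_k}=\sum_{j,k}\ip{\psi(g_k^{-1}g_j)\xi_j,\xi_k}\geq 0$. Second, that the compressed maps inherit the convergence: since $\p(xy)\p(y)^*$ is block diagonal, $\int_G\p_i(xy)\p_i(y)^*\d\mu_\alpha(y)=P_i\big(\int_G\p(xy)\p(y)^*\d\mu_\alpha(y)\big)P_i$, and compressing a net of operators by a fixed bounded operator on each side preserves SOT convergence, so $\int_G\p_i(xy)\p_i(y)^*\d\mu_\alpha(y)\to P_i\psi(x)P_i=\psi_i(x)$ in SOT for every $x$ and $i$. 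Third, that $\psi_i\in\ell^\infty(G,B(\hh))$: each integral $\int_G\p_i(xy)\p_i(y)^*\d\mu_\alpha(y)$ has norm at most $\norm{\p_i}^2$, hence so does its SOT limit $\psi_i(x)$, uniformly in $x$.

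I do not expect a genuine obstacle here; the one point that needs a moment's care is that the map $\psi$ furnished by (6) need not itself be block diagonal, so one must work with the compressions $P_i\psi(\cdot)P_i$ rather than with literal restrictions, and then invoke --- as above --- the fact that compression by a fixed projection both preserves positive definiteness and commutes with the SOT limit. The countability of $G$ (which elsewhere allows nets to be replaced by sequences) is not needed for this particular equivalence.
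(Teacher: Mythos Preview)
Your proposal is correct and follows essentially the same approach as the paper: both amalgamate $\p_1,\dots,\p_n$ into the block-diagonal map on $\hh^{\oplus n}$, apply (6), and then compress the resulting positive definite map back to the diagonal blocks (the paper writes this with the inclusions $U_i$ and compressions $U_i^*\psi(x)U_i$, which is exactly your $P_i\psi(x)P_i$ after identifying $P_i\hk$ with $\hh$). Your explicit remark that $\psi$ need not itself be block diagonal, and that one must therefore work with compressions, is a point the paper handles correctly but does not call out.
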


\begin{proof}
Suppose that ${(6)}$ holds.  Fix a separable Hilbert space $\hh$ and uniformly bounded maps ${\varphi_1,\dots,\varphi_n\in\ell^\infty(G,B(\hh))}$. For ${i=1,\dots,n}$, let ${U_i:\hh\hookrightarrow\hh^{\oplus n}}$ be the canonical inclusion map onto the $i$th coordinate and let ${\varphi:G\to B(\hh^{\oplus n})}$ be the uniformly bounded map defined by 
\[
\varphi(x)=U_1\,\varphi_1(x)\,U_1^*+\cdots+U_n\,\varphi_n(x)\,U_n^*.
\]
Applying ${(1)}$ to $\varphi$ then yields a net ${(\mu_\alpha)}$ in $\Prob(G)$ and a positive definite map ${\psi:G\to B(\hh^{\oplus n})}$ such that 
\[
\lim_{\alpha} \int_G \p(xy)\p(y)^*\d\mu_\alpha(y)= \psi(x)\ \ \ \ \forall x\in G
\]
where the limit is with respect to the strong operator topology.
For ${i=1,\dots,n}$, let ${\psi_i:G\to B(\hh)}$ be the positive definite map defined by 
\[
\psi_i(x)=U_i^*\,\psi(x)\,U_i
\]
As ${\varphi_i(x)=U_i^*\,\varphi(x)\,U_i}$ for ${i=1,\dots,n}$ and all ${x\in G}$, it follows that 
\begin{align*}
\big\|\int_G \p_i(xy)\p_i(y)^*\xi\d\mu_\alpha(y)&-\psi_i(x)\,\xi\big\|\\
&=\big\|\int_G U_i^*\p(xy)U_iU_i^*\p(y)^*U_i\xi\d\mu_\alpha(y)-U_i^*\psi(x)U_i\,\xi\big\|\\
&\leq\norm{U_j^*}\,\big\|\int_G \p(xy)\p(y)^*U_i\xi\d\mu_\alpha(y)-\psi(x)U_i\,\xi\big\|\to0
\end{align*}
for all ${x\in G}$ and ${\xi\in\hh}$, which proves the implication ${(6)\Rightarrow(7)}$. The converse is clear.
\end{proof}

\begin{lemma}
Let $G$ be a countable discrete group. The following are equivalent: 
\begin{enumerate}
\item[(7)] for every separable Hilbert space $\hh$ and every finite set of uniformly bounded maps ${\varphi_1,\dots,\varphi_n\in\ell^\infty(G,B(\hh))}$, there exists a net ${(\mu_\alpha)}_\alpha\subset \Prob(G)$ and positive definite maps ${\psi_1,\dots,\psi_n\in\ell^\infty(G,B(\hh))}$ such that 
\[
\lim_{\alpha} \int_G \p_i(xy)\p_i(y)^*\d\mu_\alpha(y)= \psi_i(x)\ \ \ \ \forall x\in G\text { and }i=1,\ldots, n
\]
where the limit is with respect to the strong operator topology.
\item[(8)] for every separable  Hilbert space $\hh$ and every finite set of uniformly bounded maps ${\varphi_1,\dots,\varphi_n\in\ell^\infty(G,B(\hh))}$, there exists a conditional expectation 
\[
\IE\colon \ell^\infty(G,B(\hh)) \to B(\hh)
\] 
such that the maps $\psi_i\colon G\to B(\hh)$ defined by
\[
 \psi_i(x) = \IE_y  \p_i(xy)\p_i(y)^*
\]
are positive definite for all $i=1,\ldots,n$.
\end{enumerate}
\end{lemma}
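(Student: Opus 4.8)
The plan is to read the equivalence as an instance of the dictionary ``conditional expectation onto $B(\hh)$ $\longleftrightarrow$ mean on $\ell^\infty(G)$'', combined with the fact that the weak-$*$ cluster points of finitely supported probability measures on $G$ are exactly the means on $\ell^\infty(G)$. The preliminary point I would record is that every conditional expectation $\IE\colon\ell^\infty(G,B(\hh))\to B(\hh)$ has the form $\langle\IE(a)\xi,\eta\rangle=\tau_y\langle a(y)\xi,\eta\rangle$ for a unique mean $\tau\in\St(\ell^\infty(G))$, and conversely every such $\tau$ yields a conditional expectation. One direction is immediate; for the other, by Tomiyama's theorem $\IE$ is a $B(\hh)$-bimodule unital completely positive map, so for $f\in\ell^\infty(G)$ the element $\IE(f\otimes 1)$ commutes with all of $B(\hh)$ and is therefore a scalar $\tau(f)$; compressing $a$ by the rank-one projection $p=|\xi\rangle\langle\xi|$ of a unit vector $\xi$ gives $p\,\IE(a)\,p=\IE(p\,a\,p)=\IE(F\otimes p)=\tau(F)\,p$ with $F(y)=\langle a(y)\xi,\xi\rangle$, and polarization finishes it. I would also use the (standard) weak-$*$ density of finitely supported probability measures in $\St(\ell^\infty(G))$.

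For $(7)\Rightarrow(8)$, fix $\hh$ and $\p_1,\dots,\p_n$ and take the net $(\mu_\alpha)$ and positive definite maps $\psi_i$ provided by $(7)$. By weak-$*$ compactness of $\St(\ell^\infty(G))$ pass to a subnet along which $\mu_\alpha\to\tau$ weak-$*$; the strong-operator convergence $\int_G\p_i(xy)\,\p_i(y)^*\,\d\mu_\alpha(y)\to\psi_i(x)$ persists along the subnet, and in particular the corresponding weak-operator convergence does. Letting $\IE$ be the conditional expectation attached to $\tau$, the identity $\langle\IE(a)\xi,\eta\rangle=\tau_y\langle a(y)\xi,\eta\rangle$ applied to $a(y)=\p_i(xy)\,\p_i(y)^*$ yields $\IE_y\big(\p_i(xy)\,\p_i(y)^*\big)=\psi_i(x)$ for all $x$ and $i$, which is positive definite by $(7)$. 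That is $(8)$.

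For $(8)\Rightarrow(7)$, fix $\hh$, $\p_1,\dots,\p_n$ and the conditional expectation $\IE$ from $(8)$, and let $\tau$ be the associated mean, so $\langle\psi_i(x)\xi,\eta\rangle=\tau_y\langle\p_i(xy)\,\p_i(y)^*\xi,\eta\rangle$ where $\psi_i(x)=\IE_y\big(\p_i(xy)\,\p_i(y)^*\big)$. Given a finite $F\subset G$, finitely many vectors $\xi_1,\dots,\xi_m\in\hh$ and $\e>0$, set $\mathcal K=\hh^{\oplus |F|nm}$ and $\Psi\colon G\to\mathcal K$, $\Psi(y)=\big(\p_i(xy)\,\p_i(y)^*\xi_j\big)_{x\in F,\,i\leq n,\,j\leq m}$; then the vector $v=\big(\psi_i(x)\xi_j\big)_{x,i,j}\in\mathcal K$ satisfies $\langle v,w\rangle=\tau_y\langle\Psi(y),w\rangle$ for all $w\in\mathcal K$. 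Picking finitely supported $\mu_\beta\to\tau$ weak-$*$ gives $\int_G\Psi\,\d\mu_\beta\rightharpoonup v$ weakly in $\mathcal K$, so $v$ lies in the weak---hence, by Mazur's theorem, the norm---closure of $\conv\{\Psi(y):y\in G\}$, and a finitely supported $\mu\in\Prob(G)$ can be chosen with $\norm{\int_G\Psi\,\d\mu-v}<\e$. Indexing these measures by the triples $(F,\{\xi_1,\dots,\xi_m\},\e)$ produces a single net $(\mu_\alpha)\subset\Prob(G)$ with $\int_G\p_i(xy)\,\p_i(y)^*\,\d\mu_\alpha(y)\to\psi_i(x)$ strongly, for every $x\in G$ and $i\leq n$; since the $\psi_i$ are positive definite, this is $(7)$. (This is exactly where the strong-operator-closed-convex-hull picture of Section~\ref{ S - convex} operates in the background.)

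The main obstacle is the passage from weak-operator to strong-operator convergence: a weak-$*$ cluster point of $(\mu_\alpha)$ only yields, a priori, weak convergence of the operator averages, and separately for each $x$ and $i$, whereas $(7)$ asks for one net realizing strong convergence simultaneously for all $x\in G$ and all $i\leq n$. The amplification trick above---packaging the finitely many relevant vectors $\p_i(xy)\,\p_i(y)^*\xi_j$ into a single vector of a large direct sum and replacing a weakly convergent net of barycenters by a norm-convergent one via Mazur---together with a reindexing over all finite data, is precisely what resolves it. A secondary technical point is the verification that every conditional expectation onto $B(\hh)$ is the slice map of a mean without assuming normality of $\IE$, which the rank-one compression argument takes care of.
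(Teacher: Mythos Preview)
Your proof is correct and follows essentially the same approach as the paper: for $(7)\Rightarrow(8)$ you take a weak-$*$ cluster point of the $(\mu_\alpha)$ in $\St(\ell^\infty(G))$ and pass to the associated conditional expectation, and for $(8)\Rightarrow(7)$ you use the weak-$*$ density of $\Prob(G)$ in $\St(\ell^\infty(G))$. The paper's proof is two sentences; you have usefully spelled out the correspondence between conditional expectations and means via Tomiyama, and more importantly the Mazur/convexity step that upgrades the a priori weak-operator convergence coming from weak-$*$ density to the required strong-operator convergence---a point the paper leaves implicit (it is indeed the content of Section~\ref{ S - convex}).
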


\begin{proof}
The forward implication follows by taking a weak$^*$ cluster point of the net $(\mu_\alpha)_\alpha$ in the state space $\St(\ell^\infty(G))$ and considering the associated conditional expectation $\IE\colon \ell^\infty(G,B(\hh)) \to B(\hh)$.  The converse implication follows by using the density of $\Prob(G)$ in $\St(\ell^\infty(G))$ with respect to the weak$^*$ topology. 
\end{proof}

We shall deduce the last lemma from Tarski's theorem.

\begin{lemma}
Let $G$ be a countable discrete group. The following are equivalent: 
\begin{enumerate}
\item[(1)] $G$ is amenable;
\item[(8)] for every separable Hilbert space $\hh$ and every finite set of uniformly bounded maps ${\varphi_1,\dots,\varphi_n\in\ell^\infty(G,B(\hh))}$, there exists a conditional expectation 
\[
\IE\colon \ell^\infty(G,B(\hh)) \to B(\hh)
\] 
such that the maps $\psi_i\colon G\to B(\hh)$ defined by
\[
 \psi_i(x) = \IE_y  \p_i(xy)\p_i(y)^*
\]
are positive definite for all $i=1,\ldots,n$.
\end{enumerate}
\end{lemma}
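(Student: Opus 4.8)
The plan is to prove the two implications separately: $(1)\Rightarrow(8)$ directly from an invariant mean (this is essentially the content of Theorem~\ref{T - stable main}), and $(8)\Rightarrow(1)$ by contraposition, feeding a paradoxical decomposition --- supplied by Tarski's theorem --- into Condition~(8).

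For $(1)\Rightarrow(8)$, fix a left-invariant mean $m\in\St(\ell^\infty(G))$ and let $\IE\colon\ell^\infty(G,B(\hh))\to B(\hh)$ be the conditional expectation determined by $\langle\IE(F)\xi,\eta\rangle=m_y\langle F(y)\xi,\eta\rangle$; it is unital, completely positive and $B(\hh)$-bimodular. Given uniformly bounded $\p_1,\dots,\p_n$, put $\psi_i(x)=\IE_y[\p_i(xy)\p_i(y)^*]$. For a finite set $x_1,\dots,x_m\in G$, left-invariance of $m$ (substituting $y\mapsto x_l^{-1}y$ under $\IE_y$) gives $\psi_i(x_k^{-1}x_l)=\IE_y[\p_i(x_k^{-1}y)\,\p_i(x_l^{-1}y)^*]$, so the matrix $\bigl(\psi_i(x_k^{-1}x_l)\bigr)_{k,l}$ is the image, under the (positive) amplification $\IE\otimes\id_{M_m}$, of the function $y\mapsto\bigl(\p_i(x_k^{-1}y)\p_i(x_l^{-1}y)^*\bigr)_{k,l}=W(y)W(y)^*\geq0$, where $W(y)$ is the column operator $\xi\mapsto(\p_i(x_k^{-1}y)\xi)_k$. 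Hence $\bigl(\psi_i(x_k^{-1}x_l)\bigr)_{k,l}\geq0$ and each $\psi_i$ is positive definite, with a single $\IE$ serving the whole family.

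For $(8)\Rightarrow(1)$, suppose $G$ is not amenable; by Tarski's theorem it admits a paradoxical decomposition: integers $p,q\geq1$, elements $g_1,\dots,g_p,h_1,\dots,h_q\in G$, and pairwise disjoint sets $A_1,\dots,A_p,B_1,\dots,B_q$ with $\bigsqcup_iA_i\sqcup\bigsqcup_jB_j=G$, $\bigsqcup_ig_iA_i=G$ and $\bigsqcup_jh_jB_j=G$. Take $\hh=\ell^2(G)$ (separable), let $\rho$ be the right regular representation, write $\mathbf 1_S$ for the operator of multiplication by the indicator function of $S$, and apply Condition~(8) to the uniformly bounded family $\p_i(y)=\rho(y)\mathbf 1_{A_i}$ for $i\leq p$ and $\p_{p+j}(y)=\rho(y)\mathbf 1_{B_j}$. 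Let $\IE$ be the resulting conditional expectation. Since $\IE$ is $B(\hh)$-bimodular, $\IE(f\,\id)$ (with $f\in\ell^\infty(G)$ regarded inside $\ell^\infty(G,B(\hh))$) commutes with every unitary, hence $\IE(f\,\id)=\mu(f)\,\id$ for some state $\mu\in\St(\ell^\infty(G))$. Computing $\p_i(xy)\p_i(y)^*=\rho(x)\,\rho(y)\mathbf 1_{A_i}\rho(y)^*=\rho(x)\mathbf 1_{A_iy^{-1}}$ gives $\psi_i(x)=\rho(x)M_i$ with $M_i=\IE_y[\mathbf 1_{A_iy^{-1}}]$, which (again by bimodularity) is diagonal with $\langle M_i\delta_z,\delta_z\rangle=\mu(\mathbf 1_{z^{-1}A_i})$. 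Positive-definiteness of $\psi_i$ forces $\psi_i(g^{-1})=\psi_i(g)^*$ for all $g$, i.e.\ $M_i\in\rho(G)'$; a diagonal operator commuting with all right translations is scalar, so $M_i=c_i\,\id$ and therefore $\mu(\mathbf 1_{wA_i})=c_i$ for all $w\in G$, and likewise $\mu(\mathbf 1_{wB_j})=c_j'$ for all $w$.

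Applying the state $\mu$ to the three partitions now yields a contradiction: $\bigsqcup_iA_i\sqcup\bigsqcup_jB_j=G$ gives $\sum_ic_i+\sum_jc_j'=1$ (take $w=e$); $\bigsqcup_ig_iA_i=G$ gives $\sum_ic_i=1$ (take $w=g_i$); $\bigsqcup_jh_jB_j=G$ gives $\sum_jc_j'=1$; hence $1=2$. Thus no conditional expectation makes all the $\psi_i$ positive definite, Condition~(8) fails, and the contrapositive is complete. The main obstacle is this second implication: recognizing that an arbitrary conditional expectation restricts on $\ell^\infty(G)\,\id$ to a scalar state $\mu$, carrying out the computation of $\psi_i$, and observing that the mere self-adjointness of the defining matrices of a positive definite map already pins the $M_i$ down to scalars; the remainder is bookkeeping with the paradoxical decomposition. (A version of $(8)\Rightarrow(1)$ that uses compactness in place of Tarski's theorem is indicated in Remark~\ref{R - Tarski remark}.)
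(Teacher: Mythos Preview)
Your proof is correct, and for the forward implication it is essentially the paper's argument (an invariant mean extended to a conditional expectation, with positive definiteness checked by left--invariance; the paper simply cites \cite{dcot19} for this).

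For $(8)\Rightarrow(1)$, however, your route differs genuinely from the paper's. The paper applies Condition~(8) in the scalar case $\hh=\CI$, so that the conditional expectation is a state $\tau$ on $\ell^\infty(G)$; the $\p$'s are the indicator functions $\chi_G,\chi_{A_j},\chi_{B_k}$ together with their $i$--power multiples, and the key step is a polarization lemma from \cite{stable} (Lemma~6.1 there) which, from positive definiteness of $x\mapsto\tau((\lambda(x)f)^*f)$ for these $f$, yields $\tau(\lambda(s)\chi_{A_j})=\tau(\chi_{A_j})$ and then the Tarski contradiction. You instead work in $\hh=\ell^2(G)$ with $\p_i(y)=\rho(y)\mathbf 1_{A_i}$, use $B(\hh)$--bimodularity of $\IE$ to see that $M_i=\IE_y[\mathbf 1_{A_iy^{-1}}]$ is diagonal with entries $\mu(\mathbf 1_{z^{-1}A_i})$, and then observe that the self--adjointness relation $\psi_i(g^{-1})=\psi_i(g)^*$ forces $M_i\in\rho(G)'$, hence $M_i$ is scalar. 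This is a nice self--contained commutant argument that avoids the external polarization lemma; on the other hand, the paper's version shows that already the one--dimensional instance of (8) suffices to recover amenability, which is a slightly sharper conclusion than your argument on $\ell^2(G)$ delivers.
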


\begin{proof}
If $G$ is amenable, every invariant mean on $G$ extends to a conditional expectation $\IE\colon \ell^\infty(G,B(\hh)) \to B(\hh)$  such that for every uniformly bounded map 
$\p\colon G\to B(\hh)$, the map $\psi\colon G\to B(\hh)$ defined by
\[
 \psi(x) = \IE_y  \p(xy)\p(y)^*, \ \ \forall x\in G
\]
is positive definite (see \cite{dcot19}). This is a strengthening of Condition (8)  in which the conditional expectation is uniform. That this statement actually characterizes amenability was observed in \cite{stable}.

Conversely, suppose that (8) holds for $\hh=\CI$ and $G$ is not amenable.  Then there exist elements ${s_1,\dots,s_n,t_1,\dots,t_m\in G}$ and pairwise disjoint subsets 
\[
{A_1,\dots,A_n,B_1,\dots,B_m\subset G}
\] 
such that 
\[
G=\bigcup_{j\,=\,1}^n s_j\mskip0.5\thinmuskip A_j=\bigcup_{k\,=\,1}^m t_k\mskip0.5\thinmuskip B_k
\]
Let $X$ denote the set of elements of the form $i^l\chi_G$, $i^l\chi_{A_j}$, $i^l\chi_{B_k}$ for every $1\leq j\leq n,\,1\leq k\leq m, 0\leq l <4$. 
Let $\ip{\ ,\ } \colon \ell^\infty(G)\times\ell^\infty(G)\to\CI$ be the positive semi-definite sesquilinear form defined by 
\[
\ip{f, g}=\IE(f^*g).
\]
As $\psi_f$ is positive definite for all $f\in X$, in turn ${\ip{\lambda(\,\cdot\,)\,f,\ f}}$ is positive definite for all ${f\in X}$; as $X$ is stable under multiplication by $i$, it follows by \cite[Lemma 6.1]{stable} that 
\[
\IE(\lambda(s)\,\chi_{A_j})=\ip{\chi_G,\lambda(s)\,\chi_{A_j}}=\ip{\lambda(s^{-1})\,\chi_G,\ \chi_{A_j}}=\ip{\chi_G,\ \chi_{A_j}}=\IE(\chi_{A_j})
\]
for all ${s\in G}$ and ${j=1,\dots,n}$, and similarly 
\[
{\IE(\lambda(s)\,\chi_{B_k})=\IE(\chi_{B_k})}
\]
for all ${s\in G}$ and ${k=1,\dots,m}$. However,  this implies that
\begin{align*}
\IE(\chi_G)+\IE(\chi_G)&=\IE(\lambda(s_1)\,\chi_{A_1})+\cdots+\IE(\lambda(s_n)\,\chi_{A_n})\\
&\ \ \ \ \ \ \ \ \ +\IE(\lambda(t_1)\,\chi_{B_1})+\cdots+\IE(\lambda(t_m)\,\chi_{B_m})
\\&=\IE(\chi_{A_1})+\cdots+\IE(\chi_{A_n})+\IE(\chi_{B_1})+\cdots+\IE(\chi_{B_m})
\\&=\IE(\chi_{A_1}+\cdots+\chi_{A_n}+\chi_{B_1}+\cdots+\chi_{B_m})
\\&\leq\IE(\chi_G)
\end{align*}
which is a contradiction, since ${\IE(\chi_G)=1}$. It thus follows that $G$ is amenable. 
\end{proof}

\begin{remark}\label{R - Tarski remark}
Instead of Tarski's theorem in (8) $\impl$ (1), one can use a compactness argument to construct  an invariant mean.  Namely, suppose that (8) holds for $\hh=\CI$, so that for  every finite set of uniformly bounded maps ${\varphi_1,\dots,\varphi_n\in\ell^\infty(G,B(\hh))}$, there exists a state $\tau^{\varphi_1,\dots,\varphi_n}$ such that the maps $\psi_i\colon G\to B(\hh)$ defined by
\[
 \psi_i(x) = \tau_y^{\varphi_1,\dots,\varphi_n} \p_i(xy)\p_i(y)^*
\]
are positive definite for all $i=1,\ldots,n$. Since $\St(\ell^\infty (G))$ is compact in the weak$^*$ topology,  the net 
\[
\{\tau^{\varphi_1,\dots,\varphi_n} : \varphi_1,\dots,\varphi_n\in \ell^\infty(G,B(\hh))\}
\] 
obtained by ordering the finite subsets of $\ell^\infty(G,B(\hh)$ by inclusion admits a cluster point $\tau$ such that
\[
 \psi(x) = \tau_y\p(xy)\p(y)^*
\]
is positive definite for every $\p\in \ell^\infty(G)$.
It follows by  \cite[Corollary 6.5]{stable} that $G$ is amenable.
\end{remark}

\section{Proof of Theorem \ref{T - main theorem}}\label{S - Proof}

Let $\hh$ be a separable Hilbert space and  ${\varphi:G\to B(\hh)}$ be a uniformly bounded map.
We assume  that Condition (5) holds:
\begin{quote}
there exists a positive definite map ${\psi:G\to B(\hh)}$ such that for every finite set $F\subset G$, every $n\geq 1$, and every finite family of functions $\xi_1,\ldots, \xi_n,\zeta_1,\ldots \zeta_n\colon F\to \hh$, there exists  a $y\in G$ such that 
\begin{align*}
\sum_{i=1 }^n\sum_{x\in F }\norm{\varphi(xy)\,\varphi(y)^*\,\xi_i(x)-\psi(x)\,\xi_i(x)}^2&\\
&\hskip-4cm \leq\sum_{i=1 }^n\sum_{x\in F }\norm{\varphi(xy)\,\varphi(y)^*\,\xi_i(x)-\zeta_i(x)}^2.
\end{align*}
\end{quote}
\noindent and establish Condition (6). 

Let ${\psi:G\to B(\hh)}$ be positive definite satisfying (5). Let $F\subset G$ be a finite set.  
 Since $\p$ is uniformly bounded, the map 
\begin{align*}
\p_F\colon  G&\to\ \ \ \ \ \ \ \ B(\bigoplus_{x\in F} \hh)\\
y&\mapsto  [\xi\mapsto (\p(xy)\p(y^*)\xi(x)))_{x\in F}]
\end{align*}
is uniformly bounded.

Let $T\in B(\bigoplus_{x\in F} \hh)$ be the operator defined by
\[
T(\xi)=(\psi(x)\xi(x)))_{x\in F}.
\]
Let $\xi_1,\ldots, \xi_n,\zeta_1,\ldots, \zeta_n\colon X\to \hh$. By assumption,  there exists a $y\in G$ such that
\[
\sum_{i=1 }^n\sum_{x\in F }\norm{\varphi(xy)\,\varphi(y)^*\,\xi_i(x)-\psi(x)\,\xi_i(x)}^2\leq\sum_{i=1 }^n\sum_{x\in F }\norm{\varphi(xy)\,\varphi(y)^*\,\xi_i(x)-\zeta_i(x)}^2
\]
Thus,
\[
\sum_{i=1 }^n\|\p_F(y)\xi_i-T\xi_i\|^2\leq\sum_{i=1 }^n\|\p_F(y)\xi_i-\zeta_i\|^2. 
\]
By Theorem  \ref{ T - SOT hull}, $T\in\overline {\conv\{\p_F(y): y\in G\}}$. Therefore there exists a net $(\mu_\alpha)_\alpha\subset \Prob(G)$ such that 
\[
\lim_\alpha \int_G \p(xy)\p(y)^*\d\mu_\alpha(y) = \psi(x)\ \ \forall x\in F
\]
where the limit is with respect to the strong operator topology. It follows by a diagonal argument that Condition (6) holds. The converse follows by reversing the steps and therefore (5) is equivalent to (6). 

By Theorem \ref{T - Tarski positive}, (6) characterizes the amenability of $G$.
This concludes the proof of Theorem \ref{T - main theorem}.

\end{document}